  \theoremstyle{definition}
  \newtheorem{definition}{Definition}[section]
  \newtheorem{example}[definition]{Example}
  \theoremstyle{plain}
  \newtheorem{lemma}[definition]{Lemma}
  \newtheorem{proposition}[definition]{Proposition}
  \newtheorem{theorem}[definition]{Theorem}
 \newcommand{\n}{\nonumber}
\begin{document}

\title{Some Residually Solvable One-Relator Groups}

\author{Delaram Kahrobaei}

\address{Doctoral Program in Computer Science Department, CUNY Graduate Center, City University of New York\\365 Fifth Ave., USA 10016}

\email{dkahrobaei@gc.cuny.edu}

\author{Andrew F. Douglas}

\address{Department of Mathematics, New York City College of Technology,
CUNY, 300 Jay Street, Brooklyn, New York, USA 11201}

\email{adouglas@citytech.cuny.edu}

\author{Katalin Bencs\'{a}th}

\address{Department of Mathematics and Computer Science, Manhattan College, Riverdale, New York 10471}

\email{katalin.bencsath@manhattan.edu}

\thanks{Partially supported by FFPP/CUNY, and PSC/CUNY}
\begin{abstract}

This communication 
records some observations made in the course of
studying 
one-relator groups from the point of view of residual solvability.
As a contribution to classification efforts we single out 
some relator types that render 
the corresponding one-relator groups residually solvable.
\end{abstract}
\maketitle

\section{Introduction}
It is well known that free groups are residually nilpotent and,
consequently, residually solvable. There is a sizable amount of
literature devoted to investigations when (residual, virtual)
properties of free groups are inherited by one-relator groups. The
purpose of this communication is to offer a collection of facts and
examples gathered in the course of attempts to characterize the
residually solvable one-relator groups in terms of the (form of) the
single defining relator. We prove a number of sufficiency results
for certain cases when the relator is a commutator, and then raise
some questions. We start with reviewing some of the literature that
motivated our interest in the topic.

G. Baumslag in \cite{baum1} showed that positive one-relator groups,
which is to say that the relator has only positive exponents, are
residually solvable. In the same paper he provides a specific
example to show that not all one-relator groups are residually
solvable. A free-by-cyclic group is necessarily residually solvable.
It is worth mentioning that the Baumslag-Solitar groups $B_{1,n}$
are solvable, but not polycyclic. On the other hand $B_{m,n}$ are
free-by-solvable by a result of Peter Kropholler \cite{PK90} who
showed their second derived subgroup to be free, therefore
residually solvable. However, there are non-Hopfian
Baumslag-Solitar's groups amongst them, and those are certainly not
residually nilpotent. In \cite{BFMT} Baumslag, Fine, Miller and
Troeger established that many one-relator groups, in particular
cyclically pinched one-relator groups are either free-by-cyclic or
virtually free-by-cyclic. These point to another large class of
residually solvable one-relator groups. In view of the recent result
of M. Sapir and I.Spakulova in \cite{SS} that with probability
tending to $1$, a one-relator group with at least $3$ generators and
the relator of length $n$ is residually finite, even virtually
residually (finite $p$)-group and coherent for all sufficiently
large $p$. In addition in \cite{S10} M. Sapir focuses on residual
properties of one-relator groups with at least $3$ generators.\\

Accordingly our focus is mainly on two generator one-relator groups.
Our two main results concern the situation where $G$ is a
one-relator group whose relator is a commutator.  We provide certain
sufficient conditions where $G$ is residually solvable but also give
examples to show that in general almost anything can occur.

Clearly the attempts to find criteria for residual solvability are
influenced by outcomes of recent and older studies on (fully)
residually freeness of one-relator groups, in particular. Examples
of such one-relator groups are surface groups, which are known to be
fully residually free \cite{CG05} and therefore are residually
solvable. Also in \cite{bb67}, B.Baumslag shows residual freeness of
the one-relator groups of the type $\langle a_1, \cdots, a_k;
{a_1}^{w_1} \cdots {a_k}^{w_k} =1 \rangle$ where $k >3$, and $w_i$s
in the ambient free group on $a_1, \cdots, a_k$ satisfy certain
conditions, implies residual solvability of such one-relator groups.


\section{Preliminaries}
For convenience, we start with a list of some the definitions,
facts, and theorems that form our starting base.

\begin{theorem}\label{von} (Von Dyck)
Suppose $G =\langle X; R \rangle$ and $D =\langle X; R  \cup S
\rangle$, with presentation maps $\gamma$  and $\mu$ respectively.
Then $x \mu \mapsto x \gamma$  $(x \in X)$ defines a homomorphism of
$G$ onto $D.$
\end{theorem}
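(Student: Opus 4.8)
The plan is to realize both $G$ and $D$ as quotients of the free group $F = F(X)$ and then to invoke the universal (factor) property of quotient groups. Write $N_R = \langle\langle R \rangle\rangle$ and $N_{R \cup S} = \langle\langle R \cup S \rangle\rangle$ for the respective normal closures in $F$, so that the presentation maps are the canonical surjections $\gamma \colon F \to G = F/N_R$ and $\mu \colon F \to D = F/N_{R \cup S}$. The homomorphism to be produced is the induced map $\phi \colon G \to D$ sending each generator $x\gamma$ of $G$ to the corresponding generator $x\mu$ of $D$.

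First I would record the trivial containment of relator sets $R \subseteq R \cup S$, which immediately yields $N_R \subseteq N_{R \cup S}$, since the normal closure is monotone in its generating set. The key step is then to observe that $\mu$ annihilates $N_R$: every $r \in R$ is among the defining relators of $D$, so $r\mu = 1$, and because $N_R$ is generated as a normal subgroup by $R$ while $\mu$ is a homomorphism, one gets $N_R \subseteq \ker \mu$.

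With $N_R \subseteq \ker \mu$ established, the factor theorem applied to the quotient $G = F/N_R$ supplies a unique homomorphism $\phi \colon G \to D$ with $\gamma\phi = \mu$; explicitly $(w\gamma)\phi = w\mu$ for every $w \in F$, so in particular $(x\gamma)\phi = x\mu$, which is the asserted action on generators. Surjectivity is then immediate: $\mu$ is onto $D$ and factors as $\gamma\phi$, so the image of $\phi$ contains $F\mu = D$; equivalently, $D$ is generated by the elements $x\mu = (x\gamma)\phi$, each of which lies in the image of $\phi$.

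I expect essentially no serious obstacle: the mathematical content reduces to the monotonicity $N_R \subseteq N_{R \cup S}$ together with the universal property of quotients, both standard. The only point demanding care is bookkeeping with the postfix composition convention and confirming the conceptual fact that enlarging the relator set can only collapse the group further, so that every relation holding in $G$ continues to hold in $D$ and the generator assignment is therefore well defined.
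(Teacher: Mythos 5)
Your proof is correct and is the standard argument (quotients of the free group on $X$, monotonicity of normal closures, and the factor theorem); the paper states Von Dyck's theorem as a known preliminary without supplying a proof, so there is nothing to diverge from. You also correctly read the intended direction of the map, namely $x\gamma \mapsto x\mu$ giving a surjection $G \to D$, despite the statement's typographical reversal.
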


\begin{theorem}\label{frei} (Freiheitssatz) \cite{mag}
Let $G$ be a one-relator group, i.e., $G = \langle  x_1, \cdots,x_q;
r = 1 \rangle$. Suppose that the relator $r$ is cyclically reduced,
i.e., the first and the last letters in r are not (formal) inverses
of each other. If each of $x_1,\cdots,x_q$ actually appears in $r$,
then any proper subset of $\{x_1,\cdots,x_q\}$ is a free basis for a
free subgroup of $G$.
\end{theorem}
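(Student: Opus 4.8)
The plan is to prove the statement by induction on the length $|r|$ of the cyclically reduced relator, following the classical Magnus strategy via HNN extensions. If $|r| = 1$, then $r = x_i$ for a single generator, $G$ is the free group on $\{x_1,\dots,x_q\}\setminus\{x_i\}$, and there is nothing to prove. For the inductive step I would split into two cases according to the exponent sums $\sigma_j$ of the generators $x_j$ in $r$.

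First I would treat the case in which some generator, say $x_1$, has exponent sum $\sigma_1 = 0$. Here the idea is to pass to the conjugates $x_j^{(k)} := x_1^{-k} x_j x_1^{k}$ for $j \ge 2$ and $k \in \mathbb{Z}$. Because $x_1$ occurs with total exponent $0$ in $r$, one can rewrite $r$ as a word $s$ in finitely many of the $x_j^{(k)}$, with $k$ ranging over some interval $0 \le k \le m$, and $s$ does not involve $x_1$ at all. The group $B = \langle x_j^{(k)} ; s = 1\rangle$ is then a one-relator group whose relator $s$ is strictly shorter in the relevant generators, so the inductive hypothesis applies to it. Moreover $G$ is realized as an HNN extension of $B$ with stable letter $x_1$, whose associated subgroups are the free subgroups generated by the two shifted copies of the $x_j^{(k)}$; that these are free of the expected rank is exactly what the inductive hypothesis (the Freiheitssatz for $B$) supplies. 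To show that a given word $w$ in a proper subset of $\{x_1,\dots,x_q\}$ is nontrivial, I would then apply Britton's Lemma, the normal form theorem for HNN extensions: a subset omitting $x_1$ lies in a single conjugate of $B$, where nontriviality follows from the inductive hypothesis, while a subset containing $x_1$ but omitting some $x_j$ is handled by tracking pinches through the stable letter.

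For the complementary case, in which every generator has nonzero exponent sum, I would first reduce to the previous case. Choosing two generators $x_1, x_2$ with $\sigma_1 = \alpha \ne 0$ and $\sigma_2 = \beta \ne 0$, I would embed $G$ into an auxiliary one-relator group $\widehat{G}$ in which a generator has exponent sum zero, by means of a change of generators: introducing a new generator $t$ together with a relation expressing $x_1$ and $x_2$ in terms of $t$ so that the transformed relator $\widehat{r}$ has $t$-exponent-sum zero, while keeping the relator length under control. Von Dyck's theorem (Theorem \ref{von}) together with a normal-form argument identifies $G$ with a subgroup of $\widehat{G}$, after which the zero-exponent-sum case yields the Freiheitssatz for $\widehat{G}$, and hence for the corresponding proper subsets of generators of $G$ sitting inside it.

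The main obstacle I anticipate is the zero-exponent-sum case, and specifically the interlocking of two things: (i) verifying that the relator $s$ obtained after the conjugation rewriting is genuinely ``smaller,'' so that the induction is well-founded, and (ii) proving that the associated subgroups of the resulting HNN extension are free and are generated precisely by the expected shifted generators. Point (ii) is where the inductive hypothesis must be invoked in its full Freiheitssatz strength, applied not to $G$ itself but to the base group $B$, and getting the bookkeeping of the index range of $k$ correct, so that Britton's Lemma can be applied cleanly to detect nontrivial words, is the delicate heart of the argument.
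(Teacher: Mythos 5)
The paper does not prove the Freiheitssatz---it is quoted as a classical result from Magnus--Karrass--Solitar \cite{mag}---so there is no internal proof to compare against. Your outline is precisely the standard Magnus induction on relator length (base case $|r|=1$; the exponent-sum-zero rewriting into conjugate generators with $G$ realized as an HNN extension over a one-relator base $B$, the associated subgroups shown free by the inductive Freiheitssatz and nontriviality detected by Britton's Lemma; the $t$-substitution embedding $x_1\mapsto yt^{-\beta}$, $x_2\mapsto t^{\alpha}$ to reduce the all-nonzero-exponent-sum case to the zero case), which is essentially the argument in the cited source, and as a sketch it is correct, with the real work concentrated exactly where you flag it---the index bookkeeping that makes the induction well-founded and the identification of the associated subgroups.
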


W. Magnus' method of structure analysis \cite{mag} for groups with a
single defining relation  has the following immediate consequence:

\begin{lemma} \label{mag1} Let  $G= \langle b, x,\cdots,c;r=1 \rangle$ be a one-relator group. Suppose that $b$ occurs in $r$ with
exponent sum zero and that upon re-expressing $r$ in terms of the
conjugates $b^i x b^{-i} = x_i, \cdots, b^i c b^{-i} = c_i$, $(i \in
\mathbb{Z})$ and renaming $r$ as $r_0$, $\mu$  and  $\nu$ are
respectively the minimum and maximum subscripts of $x$ occurring in
$r_0$. If   $\mu < \nu$ and if both $x_{\mu}$  and  $\nu$  occur
only once in $r_0$ then $N= gp_G (x,\cdots,c)$  is free. If $G$ is a
two-generator group with generators $b$ and $x$, then $N$ is free of
rank  $\nu - \mu+1$.
\end{lemma}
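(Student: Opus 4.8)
The plan is to identify $N = gp_G(x,\ldots,c)$ with the kernel of the homomorphism $\phi\colon G \to \mathbb{Z}$ sending $b \mapsto 1$ and the remaining generators to $0$; this is well defined precisely because $b$ has exponent sum zero in $r$, so $r \in \ker\phi$. Applying the Reidemeister--Schreier procedure to the transversal $\{b^i : i \in \mathbb{Z}\}$, the subgroup $N$ is generated by the conjugates $x_i = b^i x b^{-i}, \ldots, c_i = b^i c b^{-i}$ $(i \in \mathbb{Z})$ and admits the presentation $N = \langle x_i, \ldots, c_i \ (i \in \mathbb{Z}) ; r_j \ (j \in \mathbb{Z}) \rangle$, where $r_j = b^j r_0 b^{-j}$ is obtained from $r_0$ by raising every subscript by $j$. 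The hypothesis that $x_\mu$ and $x_\nu$ occur exactly once in $r_0$ then propagates: in $r_j$ the lowest and highest $x$-subscripts are $\mu+j$ and $\nu+j$, and $x_{\mu+j}$, $x_{\nu+j}$ each occur exactly once.

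The engine of the proof is the elementary fact that in any one-relator presentation where a generator occurs exactly once in the relator, that generator may be eliminated by a Tietze transformation, leaving a free presentation on the remaining generators (the degenerate case of the Freiheitssatz, Theorem~\ref{frei}). I would organize this through a family of finite interval subgroups: for $p \le q$ set $G_{[p,q]} = \langle x_i, \ldots, c_i \ (p \le i \le q) ; r_j \text{ with } [\mu+j,\nu+j] \subseteq [p,q] \rangle$, so that $N = \bigcup_{p,q} G_{[p,q]}$. The key step is to prove, by induction on the interval length, that each $G_{[p,q]}$ is free and that enlarging the interval on either side exhibits the smaller group as a free factor of the larger. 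Enlarging $[p,q]$ to $[p,q+1]$ introduces the new generators $x_{q+1},\ldots,c_{q+1}$ together with (at most) the single relator whose top $x$-subscript equals $q+1$; since $x_{q+1}$ occurs only once there, that relator merely expresses $x_{q+1}$ through lower-index generators and is absorbed, leaving $G_{[p,q]}$ together with the remaining new generators as a free basis of $G_{[p,q+1]}$. The symmetric argument, using that $x_\mu$ occurs once, handles enlargement to the left.

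Granting this, $N$ is an ascending union of free groups each of which is a free factor of the next, and any such union is free (one simply extends nested bases); this gives the first assertion. In the two-generator case the generating family consists of the $x_i$ alone, so enlarging an interval contributes no new free generators, and the rank of $N$ is already attained at the fundamental block $G_{[\mu,\nu]} = \langle x_\mu,\ldots,x_\nu; r_0 \rangle$. Here the $\nu-\mu+1$ generators $x_\mu,\ldots,x_\nu$ are subject to the single relation $r_0$, and eliminating the unique occurrence of an extremal generator reads off the rank.

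The main obstacle is the inductive freeness/free-factor claim for the interval groups. One must check that after the previous substitutions each newly adjoined relator still occurs exactly once in the generator slated for elimination (which follows from the shift structure of the $r_j$), and, more delicately, that the inclusions $G_{[p,q]} \hookrightarrow G_{[p',q']}$ are genuinely injective so that the ascending union does not collapse unexpectedly. This is exactly where the extremal once-occurrence hypotheses together with $\mu < \nu$ are essential, and where the Freiheitssatz (Theorem~\ref{frei}) and von Dyck's theorem (Theorem~\ref{von}) are invoked to confirm that the successive presentations define the expected free groups rather than proper quotients.
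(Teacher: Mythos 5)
Your outline is, in substance, the classical Magnus break-down, which is precisely what the paper relies on: Lemma \ref{mag1} is presented in the text as an ``immediate consequence'' of Magnus' structure analysis and is not actually proved there, the authors simply citing \cite{mag}. So there is no competing argument in the paper to measure you against; your identification of $N$ with $\ker(G\to\mathbb{Z})$, the Reidemeister--Schreier presentation with the shifted relators $r_j$, the Tietze elimination of the extremal generators that occur exactly once, and the passage to an ascending union of free factors reproduce the standard proof, including the correct observation that the Freiheitssatz is what keeps the inclusions of the interval subgroups injective.

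Two points should be tightened. First, in the case of more than two generators your interval groups $G_{[p,q]}$ window the subscripts of \emph{all} generators uniformly, but a relator $r_j$ whose $x$-subscripts lie in $[p,q]$ may involve $c$-subscripts outside $[p,q]$; the standard fix is to restrict only the $x$-subscripts to the window and let the subscripts of the remaining generators range over all of $\mathbb{Z}$, after which your induction goes through unchanged. Second, and more substantively, carry your own elimination to its end in the two-generator case: the block $\langle x_\mu,\dots,x_\nu; r_0\rangle$ has $\nu-\mu+1$ generators and one relator in which $x_\nu$ occurs exactly once, so eliminating $x_\nu$ leaves a free group of rank $\nu-\mu$, not $\nu-\mu+1$ as the lemma asserts. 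A sanity check confirms this: for $G=\langle b,x; b^{-1}xbx^{-1}\rangle\cong\mathbb{Z}^2$ one has $r_0=x_{-1}x_0^{-1}$, so $\mu=-1$, $\nu=0$, each extremal generator occurs once, and $N=\langle x\rangle$ is free of rank $1=\nu-\mu$, not $2$. The same count in the paper's own example $\langle a,b;[a,[a,b]]\rangle$ gives $N$ free of rank $2=\nu-\mu$. Your argument is the correct one; the rank stated in the lemma is off by one, though nothing later in the paper depends on that count.
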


\begin{definition} \label{resid} A group $G$ is \emph{residually solvable} if for each $w \in G$ $(w \neq 1)$, there exists
a solvable group $S_w$ and an epimorphism $\phi : G \longrightarrow
S_w$ such that $w \phi \neq 1$.
\end{definition}

\begin{theorem} (Kahrobaei \cite{DK04}, \cite{DK10})\label{az2}
Any generalized free product of two finitely generated torsion-free
nilpotent groups, amalgamating a cyclic subgroup is an extension of
a residually solvable group by a solvable group, therefore is
residually solvable.
\end{theorem}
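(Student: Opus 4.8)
The plan is to exhibit $G = A *_C B$ (with $A,B$ finitely generated torsion-free nilpotent and $C = \langle c\rangle$ infinite cyclic, identified with $c_A \in A$ and $c_B \in B$ under the amalgamating isomorphism) as an extension $1 \to N \to G \to Q \to 1$ with $Q$ solvable and $N$ residually solvable, and then to observe that this extension property already forces residual solvability. For that last implication I would argue directly: given $1 \neq g \in G$, if $g \notin N$ then the quotient map $G \to Q$ detects $g$ inside the solvable group $Q$; and if $1 \neq g \in N$, then since $N$ is residually solvable its $\omega$-th derived subgroup $\bigcap_m N^{(m)}$ is trivial, so $g \notin N^{(m)}$ for some $m$. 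As each $N^{(m)}$ is characteristic in $N$ and hence normal in $G$, the quotient $G/N^{(m)}$ is (solvable)-by-(solvable), hence solvable, and it detects $g$. So everything reduces to producing the right normal subgroup $N$.

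For the solvable quotient I would take $Q$ to be the largest quotient of $G$ in which the images of $A$ and $B$ commute, namely $Q = (A \times B)/\langle\langle (c_A, c_B^{-1})\rangle\rangle$, with $\rho : G \twoheadrightarrow Q$ the natural map and $N = \ker \rho$ (the case $C = 1$ is the free product $A * B$, handled directly by the last step below). Since $A \times B$ is finitely generated torsion-free nilpotent, $Q$ is nilpotent, in particular solvable, so the quotient side is settled immediately.

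The heart of the argument is to understand $N$, and for this I would first show that $c$ survives in $Q$ with infinite order, so that $N \cap C = 1$ and hence $N \cap C^{g} = (N \cap C)^{g} = 1$ for every $g \in G$. I would verify this by passing to Mal'cev completions: writing $\mathfrak{a}, \mathfrak{b}$ for the rational Lie algebras of $A, B$ and $u = \log c_A$, $v = \log c_B$, the completion of $Q$ has Lie algebra $(\mathfrak{a} \oplus \mathfrak{b})/I$ with $I$ the ideal generated by $(u,-v)$, and a lower-central-series bookkeeping shows $(u,0) \notin I$ precisely because $u$ and $v$ are nonzero (each nonzero element of a nilpotent Lie algebra sits in some $\gamma_p \setminus \gamma_{p+1}$, while the nontrivial brackets generating $I$ feed only into deeper terms $\gamma_{\geq p+1}$). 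Granting $N \cap C^{g} = 1$, the subgroup theorem of Bass--Serre theory, applied to the action of $N$ on the Bass--Serre tree of $A *_C B$, shows that $N$ acts with trivial edge stabilizers, so $N$ is a free product of a free group together with the vertex stabilizers $N \cap A^{g}$ and $N \cap B^{g}$. Each such stabilizer is a subgroup of a conjugate of $A$ or $B$, hence finitely generated torsion-free nilpotent, hence solvable and a fortiori residually solvable; since a free product of residually solvable groups is residually solvable, $N$ is residually solvable, which completes the extension and therefore the theorem.

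The step I expect to be the main obstacle is the claim that the amalgamated generator $c$ has infinite order in $Q$: this is exactly where both hypotheses, nilpotence and torsion-freeness of the factors, are indispensable. The instructive case is $c \in [A,A]$ (for instance $c$ central in a Heisenberg factor, where $c$ dies in every abelian quotient), which shows that one genuinely needs a nilpotent rather than merely abelian quotient, together with the Mal'cev/Lie-algebra computation, in order to see $c$ survive. A secondary point requiring care is the appeal to the fact that free products preserve residual solvability, which I would cite from the Gruenberg-type theory of residual properties of free products.
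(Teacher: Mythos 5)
First, note that the paper itself contains no proof of Theorem \ref{az2}: it is imported wholesale from \cite{DK04} and \cite{DK10}, so there is no in-text argument to measure yours against. Taken on its own terms, your architecture is sound and even matches the shape of the stated conclusion: $Q=(A\times B)/\langle\langle(c_A,c_B^{-1})\rangle\rangle$ is finitely generated nilpotent, hence solvable; if $N=\ker(G\to Q)$ meets every conjugate of $C$ trivially, then the Bass--Serre structure theorem exhibits $N$ as a free product of a free group with subgroups of conjugates of $A$ and $B$; and your reduction of residual solvability of $G$ to that of $N$ is precisely Gruenberg's observation already recorded as Lemma \ref{P}.

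Two steps need shoring up. The more serious one is the Mal'cev computation: as you describe it, the bookkeeping does not close the case where $c_A$ and $c_B$ sit at different depths of the lower central series. If $u\in\gamma_p(\mathfrak{a})\setminus\gamma_{p+1}(\mathfrak{a})$ and $v\in\gamma_q(\mathfrak{b})\setminus\gamma_{q+1}(\mathfrak{b})$ with $p<q$, then reading $(u,0)=\lambda(u,-v)+w$ modulo $\gamma_{p+1}$ forces $\lambda=1$ but produces no contradiction in the second coordinate, because $v$ already lies in $\gamma_{p+1}(\mathfrak{b})$. The claim is nevertheless true: the ideal $I$ generated by $(u,-v)$ decomposes as $\mathbb{Q}(u,-v)+\bigl(J_{\mathfrak{a}}\oplus J_{\mathfrak{b}}\bigr)$, where $J_{\mathfrak{a}}$ is the ideal generated by $[u,\mathfrak{a}]\subseteq\gamma_{p+1}(\mathfrak{a})$ and $J_{\mathfrak{b}}$ the ideal generated by $[v,\mathfrak{b}]\subseteq\gamma_{q+1}(\mathfrak{b})$; membership of $(u,0)$ in $I$ would then force $v\in J_{\mathfrak{b}}\subseteq\gamma_{q+1}(\mathfrak{b})$, a contradiction. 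The second soft spot is the appeal to ``free products preserve residual solvability'' as a black box. That statement is Gruenberg's root-class theorem from \cite{gruenb} (and solvable groups do form a root class), but here something more elementary suffices and is worth saying: every vertex stabilizer $N\cap A^{g}$ or $N\cap B^{g}$ is nilpotent of class at most $\max(\mathrm{cl}(A),\mathrm{cl}(B))$, so all non-free factors of $N$ are solvable of uniformly bounded derived length $d$; the kernel of the natural map from this free product onto the direct product of its factors (abelianizing the free factor) is free, so $N$ is free-by-(solvable of derived length at most $d$) and Lemma \ref{P} applies directly. With those two repairs your proof is complete.
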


\begin{theorem} (Kahrobaei \cite{DK04}, \cite{DK10})\label{az3}
Any generalized free product of an arbitrary number of finitely
generated nilpotent groups of bounded class, amalgamating a subgroup
central in each of the factors, is an extension of a free group by a
nilpotent group, therefore is residually solvable.
 \end{theorem}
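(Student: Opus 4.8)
The plan is to exhibit an explicit free-by-nilpotent structure on $G = \ast_H A_i$, where the $A_i$ are finitely generated nilpotent of class at most $c$ and $H$ is central in each $A_i$, and then to read off residual solvability from that structure. The first step is to build the nilpotent quotient. Since $H$ lies in the center of every factor, forcing the images of distinct factors to commute produces the central product $\bar G$, that is, the quotient of $G$ by the normal closure of all commutators $[a,b]$ with $a \in A_i$, $b \in A_j$ and $i \ne j$. This $\bar G$ is generated by pairwise-commuting subgroups, each a homomorphic image of some $A_i$, so it is an epimorphic image of the (possibly infinite) direct product $\prod_i A_i$. Because the class bound $c$ is uniform, the law $\gamma_{c+1} = 1$ is an identity satisfied by $\prod_i A_i$, hence $\prod_i A_i$ and its quotient $\bar G$ are nilpotent of class at most $c$. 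By Von Dyck (Theorem \ref{von}) the amalgam $G$ surjects onto $\bar G$; call this map $\psi$ and set $F = \ker \psi$.

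The crux of the argument is to show that $F$ is free. The key point is that each factor embeds into the central product, since the relations imposed on $G$ only identify the central copies of $H$ and force commutativity between different factors; thus the restriction $\psi|_{A_i}$ is injective and $F \cap A_i = 1$. As $F$ is normal in $G$, this forces $F \cap g A_i g^{-1} = 1$ for every $g \in G$ and every $i$, and a fortiori $F$ meets every conjugate of $H$ trivially (as $H \le A_i$). Viewing $G$ as the fundamental group of the star-shaped graph of groups whose edge groups are all $H$ and whose vertex groups are the $A_i$, the subgroup $F$ therefore acts without fixed points on the associated Bass--Serre tree, since it avoids every vertex and edge stabilizer. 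A group acting freely (without inversions) on a tree is free, so $F$ is free. Equivalently, the Karrass--Solitar subgroup theorem for amalgamated products writes $F$ as a free product of a free group together with conjugates of the intersections $F \cap g A_i g^{-1}$, all of which are now trivial.

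The exact sequence $1 \to F \to G \to \bar G \to 1$ now displays $G$ as free-by-nilpotent. Residual solvability then follows from the derived series: if $\bar G$ has derived length $d$, then $G^{(d)} \le F$, so $G^{(d)}$ is free by Nielsen--Schreier, and since every free group $L$ satisfies $\bigcap_k L^{(k)} = 1$ (Magnus residual nilpotence together with $L^{(k)} \le \gamma_{2^k}(L)$), we obtain $\bigcap_n G^{(n)} \subseteq \bigcap_k (G^{(d)})^{(k)} = 1$. Each $G/G^{(n)}$ is solvable, so any nontrivial $w \in G$ lies outside some $G^{(n)}$ and survives in the solvable quotient $G/G^{(n)}$; hence $G$ is residually solvable in the sense of Definition \ref{resid}.

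I expect the freeness of $F$ to be the substantive step. Everything hinges on the factors embedding into the nilpotent quotient $\bar G$, which is exactly what makes $F$ avoid all vertex and edge stabilizers of the Bass--Serre tree; once that is in place, the tree action and the derived-series computation are routine. The role of the hypotheses is transparent in this scheme: centrality of $H$ is what yields a nilpotent central-product quotient at all, and the uniform class bound is what keeps the (possibly infinite) product $\prod_i A_i$ nilpotent.
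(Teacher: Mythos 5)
The paper does not actually prove this statement---it is imported from \cite{DK04}, \cite{DK10}---but your argument is correct and realizes precisely the free-by-nilpotent structure the theorem asserts: the kernel of the surjection onto the central product of the factors (nilpotent of class at most $c$ because the class bound is uniform) meets every conjugate of every vertex and edge group trivially, hence is free by the Bass--Serre/Karrass--Solitar subgroup theorem, and the derived-series computation at the end is just Gruenberg's observation (Lemma \ref{P}). As far as one can tell from the form of the statement, this is the same route as the cited source; the only step worth writing out more carefully is the injectivity of $\psi|_{A_i}$, which should be verified on the explicit model $\bigl(\bigoplus_i A_i\bigr)/D$ of the central product rather than asserted from the shape of the relations.
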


\begin{theorem} (Kahrobaei \cite{DK04}, \cite{DK10}) \label{az4}
The generalized free product of a finitely generated torsion-free
abelian group and a nilpotent group is (residually
solvable)-by-abelian-by-(finite abelian), consequently residually
solvable.
\end{theorem}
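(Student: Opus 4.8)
The plan is to exhibit $G = A *_C N$ as an extension of a residually solvable group by a solvable one, and then to read off the finer decomposition from the abelianization of $G$. Write $A$ for the finitely generated torsion-free abelian factor, $N$ for the nilpotent factor, and $C$ for the amalgamated subgroup. First I would form the quotient $P$ of $G$ obtained by additionally imposing that every element of $A$ commute with every element of $N$; by Theorem \ref{von} the identity map on generators induces an epimorphism $G \to P$, and $P$ is a quotient of the direct product $A \times N$. Since $A$ is abelian and $N$ is nilpotent, $A \times N$ is nilpotent, hence so is its quotient $P$; in particular $P$ is solvable. Let $M = \ker(G \to P)$, so that $M$ is the normal closure in $G$ of the commutators $[a,n]$ with $a \in A$ and $n \in N$, and $G/M \cong P$ is solvable.

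The heart of the matter is to show that $M$ is residually solvable. Here I would use the action of $G$ on the Bass--Serre tree of the amalgam $A *_C N$ and restrict it to $M$: the vertex stabilizers of $M$ are the intersections of $M$ with conjugates of $A$ and of $N$, hence are abelian or nilpotent, while the edge stabilizers are intersections of $M$ with conjugates of $C$, hence abelian. Thus $M$ is the fundamental group of a graph of groups whose vertex and edge groups are all (residually solvable) nilpotent groups. In the favourable case --- when the amalgamated subgroup embeds in $P$, so that $M$ meets every conjugate of $C$ trivially --- the edge groups are trivial and $M$ is a free product of abelian and nilpotent groups together with a free group; since free groups and nilpotent groups are residually solvable and the class of residually solvable groups is closed under free products, $M$ is residually solvable.

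With $M$ residually solvable and $G/M$ solvable, the extension principle already used for Theorems \ref{az2} and \ref{az3} --- an extension of a residually solvable group by a solvable group is again residually solvable, which one verifies by replacing a solvable-quotient witness inside $M$ by its normal core in $G$ --- shows that $G$ itself is residually solvable. The stated finer structure then follows formally: $G^{ab}$ is a quotient of $A \oplus N^{ab}$ and hence a finitely generated abelian group $\mathbb{Z}^k \oplus T$ with $T$ finite, so, letting $H$ be the preimage in $G$ of the free part $\mathbb{Z}^k$, we obtain a chain $G' \le H \le G$ in which $G'$ is residually solvable (being a subgroup of the residually solvable group $G$), $H/G'$ is free abelian, and $G/H \cong T$ is finite abelian. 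I expect the main obstacle to be precisely the residual solvability of $M$ in the general case: when the image of $C$ in $N$ meets $[N,N]$, the subgroup $M$ acquires nontrivial abelian edge stabilizers and is a genuine graph of nilpotent groups rather than a free product, so separating its nontrivial elements into solvable quotients then requires exploiting the residual finiteness and residual (torsion-free) nilpotence of the abelian and nilpotent factors directly, instead of appealing to closure of residual solvability under free products.
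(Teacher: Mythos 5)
First, a point of reference: the paper itself does not prove Theorem \ref{az4} --- it is quoted from \cite{DK04} and \cite{DK10} --- so there is no in-paper argument to measure your attempt against, and it must be judged on its own terms. On those terms it has a genuine gap, and you have correctly located it yourself. The reduction is sound as far as it goes: $P$ is a quotient of $A\times N$, hence nilpotent, and Lemma \ref{P} finishes the job \emph{once} $M=\ker(G\to P)$ is known to be residually solvable. But you only establish that in the ``favourable case'' where $M$ meets every conjugate of $C$ trivially, i.e.\ where $C$ embeds in $P$. That hypothesis can fail (for instance when the copy of $C$ inside $N$ lies in $[N,N]$, so that nontrivial elements of $C$ die in $P$), and then $M$ is the fundamental group of an infinite graph of groups with nilpotent vertex groups and \emph{nontrivial} abelian edge groups. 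There is no general principle that such a group is residually solvable --- Theorems \ref{az2} and \ref{az3} exist precisely because amalgamation over nontrivial subgroups is where residual solvability becomes delicate --- and your closing sentence only gestures at residual finiteness and residual nilpotence of the factors without producing an argument. Since the case with nontrivial edge groups is the one that carries the content of the theorem, the proof is incomplete. (Two smaller remarks: closure of residual solvability under free products is not free of charge --- it is Gruenberg's root-class theorem \cite{gruenb} --- although in your favourable case you can sidestep it because the factors have uniformly bounded derived length, so the free product is free-by-solvable and Lemma \ref{P} applies; and yes, subgroups of residually solvable groups are residually solvable, so your final structural step is fine once the main gap is filled.)

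It is also worth noting that your decomposition does not explain the three-layer conclusion; you recover ``(residually solvable)-by-abelian-by-(finite abelian)'' a posteriori from the abelianization, after residual solvability is already in hand. A decomposition that exploits the abelianness of $A$ directly is more likely the intended route: since $C\le A$ and $A$ is abelian, $C$ is normal in $A$ and $A/C\cong \mathbb{Z}^k\oplus T$ with $T$ finite; by Theorem \ref{von} there is an epimorphism $G\to A/C$ killing $N$ (legitimate because $C$ maps to $1$), which exhibits $G$ as (kernel)-by-(free abelian)-by-(finite abelian), with the kernel built out of the $A$-conjugates of $N$, all of which contain the common subgroup $C=aCa^{-1}$. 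That kernel is an (ascending union of) tree products of copies of $N$ amalgamated over $C$, which is much closer to the situations handled by Theorem \ref{az3} and Proposition \ref{below}. Proving that kernel residually solvable is still real work --- it is essentially the content of \cite{DK04} --- but it both matches the stated structure and isolates the correct remaining difficulty, whereas your commutator-subgroup $M$ obscures it.
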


Note that the groups in all three of these theorems above satisfy
the conditions of K. Gruenberg's portent observation \cite{gruenb}
that we record here as
\begin{lemma} \label{P}
Suppose $P$ is any group, $K  \lhd P$ with $P/K$ solvable and $K$
residually solvable. Then $P$ is residually solvable.
\end{lemma}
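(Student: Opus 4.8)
The plan is to verify Definition \ref{resid} for $P$ directly, taking an arbitrary $w \in P$ with $w \neq 1$ and distinguishing two cases according to whether $w$ lies in $K$. If $w \notin K$, the situation is immediate: the canonical projection $\pi : P \to P/K$ is an epimorphism onto the solvable group $P/K$, and $w\pi \neq 1$ because $w \notin K$, so $P/K$ itself serves as the required solvable quotient.

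The substantive case is $1 \neq w \in K$. Here residual solvability of $K$ supplies a solvable group $S$ and an epimorphism $\psi : K \to S$ with $w\psi \neq 1$. The one real obstacle is that $\ker\psi$ is a priori only normal in $K$, not in $P$, so it cannot by itself be used to build a quotient of $P$ in which $w$ survives. I would remove this obstacle by replacing $\ker\psi$ with a term of the derived series of $K$: letting $n$ be the derived length of $S$, the map $\psi$ carries $K^{(n)}$ into $S^{(n)} = 1$, so $K^{(n)} \subseteq \ker\psi$ and hence $w \notin K^{(n)}$.

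The point of this replacement is that $K^{(n)}$ is fully invariant, in particular characteristic, in $K$, and a characteristic subgroup of a normal subgroup is normal in the ambient group; thus $K^{(n)} \lhd P$. It then remains only to check that $P/K^{(n)}$ is solvable, which follows since it has the normal subgroup $K/K^{(n)}$, solvable of derived length at most $n$, with quotient $(P/K^{(n)})/(K/K^{(n)}) \cong P/K$ solvable by hypothesis, and an extension of a solvable group by a solvable group is solvable. Consequently $P \to P/K^{(n)}$ is an epimorphism onto a solvable group under which $w$ survives, as Definition \ref{resid} requires. I expect the whole proof to be brief, the only delicate step being the passage from $\ker\psi$ to the characteristic subgroup $K^{(n)}$, which is precisely what makes normality in $P$ available for free.
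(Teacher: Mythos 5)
Your argument is correct and complete. Note that the paper itself records Lemma \ref{P} only as a known observation of Gruenberg, with a citation and no proof, so there is no proof in the text to compare against; your write-up supplies the standard argument in full. The two-case split, the passage from $\ker\psi$ to the fully invariant subgroup $K^{(n)}$ (which is exactly what secures normality in $P$), and the closing observation that an extension of a solvable group by a solvable group is solvable are all accurate, and nothing further is needed.
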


\section{The single relator is a commutator}
We first recall a result from \cite{baum2} for a particular class of
non-positive one-relator groups. Let $G$ be a group that can be
presented in the form,
\begin{equation}\label{first}
G= \langle t,a,...,c;uw^{-1} =1 \rangle,
\end{equation}
where $u$ and $w$ are positive words in the given generators and
each generator occurs with exponent sum zero in $uw^{-1}$. 
Then $G$ is residually solvable. In fact, $G$ is free-by-cyclic.

Now consider the group,
\begin{equation}
H = \langle t, a,..., c; [u,w]=1  \rangle.
\end{equation}
If $u$ and $w$ are positive, $H$ can be recognized as one of the
groups in the preceding class \eqref{first}. Hence $H$ is free-by-cyclic and therefore
residually solvable.
However, known examples show that residual solvability for $H$ is not guaranteed once the requirement that
both $u$ or $w$ be positive is relaxed: 



\begin{example} \label{ex1} \cite{baum2}
If $G = \langle  a,b,...,c; [u,v]=1  \rangle$ where
\begin{equation}
u=a, ~ v=[a,b][w,w^b], ~\text{and}~ [a,b]^{-1} [a,b]^a, \n
\end{equation}
then $G$ is not residually solvable. For it follows from Magnus'
solution of the word problem that $w \neq 1$ \cite{mag}. Furthermore
since $[u,v]=1$ we find that
\begin{equation}
[a,b]^a[w,w^b]^a=[a,b][w,w^b],\n
\end{equation}
so that
\begin{equation}
w=[a,b]^{-1}[a,b]^a=[w,w^b]([w,w^b]^a)^{-1}.\n
\end{equation}
Thus $w$ lies in every term of the derived series of $G$.
\end{example}

In contrast, the next example is a residually solvable one-relator group.

\begin{example}
The group $G=\langle a,b; [a,[a,b]] \rangle$  is free-by-cyclic.
\end{example}
\begin{proof}
We expand and re-express the relator,
\begin{equation}
r=[a,[a,b]]=a^{-1}[a,b]^{-1}a[a,b]=a^{-1}b^{-1}a^{-1}bab^{-1}ab.
\end{equation}
Observe that in $$r_0 =b_1^{-1} b_2 b_1^{-1} b_0$$ and $\mu =2$,
$\nu = 0$, and $b_0$ and $b_2$ both occur only once and we can
invoke lemma \ref{mag1}. Therefore $G$, as a cyclic extension of the
free group $N=gp_G (b)$ is residually solvable by (c.f. \ref{P}).
\end{proof}

\section{Connection between Generalized Free Products and
One-Relator Groups}
Over the years since W. Magnus developed his
treatment of one-relator groups the increased interest in them
yielded many new results. Karrass-Solitar in $1971$ showed that a
subgroup of a one-relator group is either solvable or contains a
free subgroup of rank two. Baumslag-Shalen showed that every
one-relator group with at least four generators can be decomposed
into a generalized free product of two groups where the amalgamated
subgroup is proper in one factor and of infinite index in the other.
Fine-Howie-Rosenberger \cite{fine} and Culler-Morgan \cite{CM87}
showed that any one-relator group with torsion and at least three
generators can be decomposed, in a non-trivial way, as an
amalgamated free product.

These results made it seem a reasonable preassumption that a closer
look at the residual solvability of generalized free products of two
groups could help in establishing residual solvability of further
one-relator groups. The following result confirms that assumption.

\begin{theorem} \label{main1}
The group $G=\langle a,b; [a,w] \rangle$, where $w=[a,b]^n$ ($n \in
\mathbb{N}$), is residually solvable.
\end{theorem}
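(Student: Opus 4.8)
The plan is to realize $G$ as a cyclic extension of a normal subgroup $N$ which, though not free, is residually solvable, and then to invoke Gruenberg's criterion (Lemma \ref{P}). Both $a$ and $b$ occur with exponent sum zero in the relator $r=[a,[a,b]^n]$, so I would use the homomorphism $G\to\mathbb{Z}$ given by the exponent sum of $a$, whose kernel $N=gp_G(b)$ is generated by the conjugates $b_i=a^i b a^{-i}$ $(i\in\mathbb{Z})$. Since $G/N\cong\mathbb{Z}$ is abelian, hence solvable, Lemma \ref{P} reduces the theorem to proving that $N$ is residually solvable.

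First I would perform the Magnus rewriting of the relator in the generators $b_i$. Using $[a,b]=b_{-1}^{-1}b_0$ and $a^{-1}b_i a=b_{i-1}$, one computes
\[
 r_0=(b_{-1}^{-1}b_{-2})^n\,(b_{-1}^{-1}b_0)^n,
\]
so that $N$ has the presentation $\langle\, b_i\ (i\in\mathbb{Z});\ a^{j}r_0a^{-j}=1\ (j\in\mathbb{Z})\,\rangle$. The decisive manoeuvre is a triangular (Nielsen) change of generators: setting $d_i=b_{i-1}^{-1}b_i$, the set $\{b_0\}\cup\{d_i:i\in\mathbb{Z}\}$ is again a free basis of the underlying free group, while each defining relator becomes $d_j^{\,n}d_{j-1}^{-n}$. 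Thus the relations assert precisely that all the powers $d_i^{\,n}$ coincide, and they involve none of $b_0$. Consequently
\[
 N\cong\langle b_0\rangle * M,\qquad M=\langle\, d_i\ (i\in\mathbb{Z});\ d_i^{\,n}=d_{i-1}^{\,n}\ (i\in\mathbb{Z})\,\rangle.
\]

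Now $M$ is exactly the generalized free product of the infinitely many infinite cyclic groups $\langle d_i\rangle$, amalgamated over the common subgroup $\langle d_i^{\,n}\rangle$, which is central in each abelian factor. This is precisely the situation governed by Theorem \ref{az3}, so $M$ is free-by-nilpotent and hence residually solvable. Since $N$ is the ordinary free product of the infinite cyclic group $\langle b_0\rangle$ with $M$, and since residual solvability is preserved under free products, $N$ is residually solvable; the theorem then follows from Lemma \ref{P}.

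The computational heart, and the step I expect to require the most care, is the change of generators $d_i=b_{i-1}^{-1}b_i$ that converts the $n$-th-power commutator relator into the clean central-amalgam relations $d_i^{\,n}=d_{i-1}^{\,n}$ while simultaneously splitting off the free factor $\langle b_0\rangle$; verifying that this substitution is an automorphism of the relevant free group and that no hidden relations survive is where the bookkeeping lies. A secondary point needing justification is that the ordinary free product $\langle b_0\rangle * M$ inherits residual solvability from its factors, a standard closure property of the class of residually solvable groups. Finally, the case $n=1$ collapses $M$ to $\mathbb{Z}$ and recovers the free-by-cyclic phenomenon of the preceding example, so the argument treats all $n\in\mathbb{N}$ uniformly.
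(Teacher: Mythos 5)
Your argument is correct, and it reaches the conclusion by a genuinely more direct route than the paper. The paper also sets $N=gp_G(b)$ and rewrites the relator in the shifted generators, but it then works with the finite pieces $N_{i,j}$ generated by finitely many conjugates of the rewritten relator: each $N_{i,j}$ is exhibited as a finite chain of infinite cyclic groups amalgamated along the central subgroups $\langle x_i^n\rangle=\langle x_{i+1}^n\rangle$ together with a free factor $\langle y\rangle$, Theorems \ref{az2} and \ref{az3} give residual solvability of each piece, and a separate limit argument (Proposition \ref{below}, controlling $\delta_i N\cap N_{j,k}=\delta_i N_{j,k}$) is needed to pass to the ascending union $N=\cup N_{r,s}$. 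You instead perform the triangular change of basis $d_i=b_{i-1}^{-1}b_i$ once on the full Reidemeister--Schreier presentation of $N$, obtaining in one step $N\cong\langle b_0\rangle * M$ with $M$ the amalgam of all the $\langle d_i\rangle$ over the single central subgroup generated by the common value of the $d_i^{\,n}$; since Theorem \ref{az3} is stated for an arbitrary number of factors, it applies to $M$ directly and the ascending-union proposition becomes unnecessary. Your rewriting agrees with the paper's up to an index convention (the paper's $x_0=b_1^{-1}b_0$, $x_1=b_2^{-1}b_1$, $y=b_1$ is exactly your $d$-substitution on one conjugate of the relator), and your observation that $n=1$ recovers the free-by-cyclic case is consistent. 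The one step you should pin down with a reference is the closure of residual solvability under free products: this is not formally among the paper's preliminaries, but it follows from Gruenberg's theorem that soluble groups form a root class \cite{gruenb}, and the paper itself uses the same closure implicitly when it splits off the free factor $\langle y\rangle$ from $N_0$. In short: same skeleton (Magnus rewriting, central cyclic amalgams, Lemma \ref{P} at the end), but your global change of generators trades the paper's direct-limit bookkeeping for a single application of Theorem \ref{az3}.
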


\begin{proof}
Put $N=gp_G(b)$, the normal closure of $b$ in $G$. Using the Magnus
break-down, we consider:
\begin{equation}
N_0 = \langle b_0,b_1,b_2; (b_1 b_0)^n = (b_2 ^{-1} b_1)^n \rangle.
\end{equation}
Now let $$x_0 =b_1^{-1} b_0, ~ x_1 = b_2^{-1} b_1,~ y=b_1.$$ Tietze
transformations confirm that
\begin{equation}
N_0 = \langle x_0 x_1 y; (x_0)^n =(x_1)^n  \rangle = \langle x_0, x_1;  (x_0)^n =(x_1)^n \rangle \ast \langle y \rangle.
\end{equation}
Next let $K= \langle x_0, x_1; (x_0)^n = (x_1)^n \rangle$. Clearly
\begin{equation}
K =  \{ \langle x_0 \rangle \ast \langle x_1 \rangle;  \langle x_0^n \rangle = \langle x_1^n \rangle \}
\end{equation}
Since each factor of $K$ is abelian, by theorem \ref{az2} $K$ is
residually solvable. The free factor of $N_0$, $\langle y \rangle$
is also residually solvable. Therefore $N_0$  is residually
solvable, and it follows for every $i \in \mathbb{N}$ that  $N_i$
is residually solvable. If we put $N_{i,j}=gp(N_i,N_{i+1},...,N_{j})$, the proceeding approach gives

\begin{equation}
N_{i,j}= \langle x_i \rangle \ast_{\langle   x_i^n \rangle = \langle x_{i+1}^n \rangle} \ast \langle  x_{i+1} \rangle
\ast \cdot \cdot \cdot \ast \langle x_j \rangle \ast _{ \langle    x_{j}^n  \rangle = \langle  x_{j+1}^n \rangle    } \ast \langle x_{j+1} \rangle \ast \langle y \rangle.\n
\end{equation}

Therefore, by Theorem \ref{az3}, $N_{i,j}$ is residually solvable. A task that remains for completing the proof is to show that the ascending union $N= \cup_{r<0; s>0  } N_{r,s}$
is residually solvable, which will be taken care of by the following proposition \ref{below}. Granted that, the residual solvability of $G$ follows with the use of
corollary \ref{P}.
\end{proof}

\begin{proposition}\label{below}
$N= \cup_{r<0; s>0}  N_{r,s}$ is residually solvable.
\end{proposition}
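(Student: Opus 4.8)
The plan is to realize $N$ as the infinite generalized free product obtained by letting $r \to -\infty$ and $s \to +\infty$ in the factorizations of the $N_{r,s}$ exhibited in the proof of Theorem \ref{main1}: the amalgam of the infinite cyclic groups $\langle x_k \rangle$ $(k \in \mathbb{Z})$ along the relations $x_k^{\,n} = x_{k+1}^{\,n}$, together with the free part carried by the $y$'s. With this picture, $N = \cup_{r<0;\,s>0} N_{r,s}$ is a genuinely ascending union of the residually solvable groups $N_{r,s}$. Now an ascending union of residually solvable groups need not itself be residually solvable, so the argument cannot rest on the filtration alone; the structural feature I would exploit is that \emph{each $N_{r,s}$ is a retract of $N$}.

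For fixed $r<0<s$ I would define a homomorphism $\rho_{r,s} \colon N \to N_{r,s}$ that fixes every generator whose index lies in the window $[r,s]$, sends each $x_k$ with $k$ above the window to the top boundary generator of $N_{r,s}$ and each $x_k$ with $k$ below the window to the bottom boundary generator, and kills the free generators lying outside the window. The point to verify is that $\rho_{r,s}$ respects the defining relations: a relation $x_k^{\,n} = x_{k+1}^{\,n}$ lying entirely inside the window is preserved verbatim, while one that straddles or lies beyond either end collapses to a trivial identity $x^{\,n} = x^{\,n}$ once the tail is folded onto the boundary. Granting this, $\rho_{r,s}$ is well defined and is the identity on $N_{r,s}$, so $N_{r,s}$ is indeed a retract of $N$.

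To finish, let $1 \neq w \in N$. Because the union is ascending, $w$ lies in $N_{r,s}$ for suitable $r<0<s$. As $N_{r,s}$ is residually solvable there is an epimorphism $\psi$ from $N_{r,s}$ onto a solvable group $S$ with $w\psi \neq 1$; composing with the retraction yields a homomorphism $\psi \circ \rho_{r,s} \colon N \to S$ whose image is solvable and which satisfies $w(\psi\circ\rho_{r,s}) = w\psi \neq 1$, since $\rho_{r,s}$ fixes $w$. In the language of Definition \ref{resid} this supplies the required solvable quotient for $w$, so $N$ is residually solvable. (Equivalently, the retraction forces $\cap_k N^{(k)} = 1$: if $w \in N^{(k)}$ then $w = w\rho_{r,s} \in N_{r,s}^{(k)}$, which is impossible for $k$ large.)

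The main obstacle I anticipate is not the final residual-solvability bookkeeping but the structural identification in the first two steps: one must check carefully that the passage to the limit yields the infinite amalgam with exactly the relations $x_k^{\,n} = x_{k+1}^{\,n}$, and that the folding map respects them. This is precisely where the shape of the Magnus rewriting matters, since it is the fact that each relation links only consecutive indices that makes the nearest-boundary folding a homomorphism at all.
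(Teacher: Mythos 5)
Your proof is correct, but it takes a genuinely different route from the paper's. The paper works with the derived series of $N$ directly: it first observes that $\delta_i N = \cup_{r,s}\,\delta_i N_{r,s}$ (any finite product of commutators involves only finitely many of the $N_{r,s}$), and then establishes the intersection formula $\delta_i N \cap N_{j,k} = \delta_i N_{j,k}$ by an argument modeled on Lemma~4.3 of \cite{baum1} (``conjugation by suitable powers''); given a nontrivial $a \in N_{-j,j}$, residual solvability of $N_{-j,j}$ puts $a$ outside some $\delta_i N_{-j,j} = \delta_i N \cap N_{-j,j}$, hence outside $\delta_i N$, and $N/\delta_i N$ is solvable. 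Your retraction $\rho_{r,s}$ replaces that intersection formula: since every defining relation $x_k^n = x_{k+1}^n$ of the limiting amalgam links only consecutive indices, the nearest-boundary folding respects all relations (Von Dyck, Theorem~\ref{von}) and restricts to the identity on $N_{r,s}$, so composing a separating solvable quotient of $N_{r,s}$ with $\rho_{r,s}$ finishes the argument. In fact your approach subsumes the paper's key step: for $w \in \delta_i N \cap N_{j,k}$ one has $w = w\rho_{j,k} \in \rho_{j,k}(\delta_i N) \subseteq \delta_i N_{j,k}$, which is exactly the formula the paper imports from Baumslag. What your route buys is a self-contained, more elementary argument that avoids the external reference and the delicate conjugation analysis; what it requires (and you correctly flag) is the structural identification of $N$ as the infinite chain amalgam together with the injectivity of $N_{r,s} \hookrightarrow N$, both of which are already implicit in the paper's Magnus breakdown. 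One small bookkeeping remark: in the paper's normal form $N_{i,j} = \langle x_i\rangle \ast_{\langle x_i^n\rangle=\langle x_{i+1}^n\rangle} \cdots \ast \langle x_{j+1}\rangle \ast \langle y\rangle$ there is a single free generator $y$ common to every window (one $b_k$ together with the $x_k$'s recovers all the other $b$'s), so there are no free generators outside the window to kill; this discrepancy is harmless and does not affect your argument.
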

\begin{proof}
We will retain notation from the proof of Theorem \ref{main1} and
start with the assumption that $N_{i,j}$ is residually solvable for
all $\ i,j \in \mathbb{N}$ ($(i \leq j)$. For the derived series of
$N= \cup_{r<0; s>0  } N_{r,s}$, we have
\begin{equation}
\delta_i N = \delta_i (\cup_{r<0; s>0  }  N_{r,s}). \n
\end{equation}
Every element in  $g \in \delta_i N$ is a finite product of
commutators of elements from a (finite) subset of the  $N_{r,s}$
groups. So $g \in \delta_i N_{r,s}$  for suitably small  value of
$r<0$ and suitably large value of $s>0$. Thus  $\delta_i N=
\cup_{r<0; s>0  }  N_{r,s}$. Now, if $j$, $k$ are a pair of fixed
integers the infinite union above can be rewritten as
\begin{equation}
\delta_i N =  \cup_{r<0; s>0  } \delta_i N_{j+r,k+s},\n
\end{equation}
so that,
\begin{equation}
\delta_iN \cap N_{j,k}=  (\cup_{r<0; s>0  } \delta_i N_{j+r,k+s}) ) \cap N_{j,k}.\n
\end{equation}
Equivalently,
\begin{equation}
\delta_i N \cap N_{j,k}=  \cup_{r<0; s>0  } (\delta_i N_{j+r,k+s} \cap  N_{j,k}).\n
\end{equation}
Further, each term in the union an be written as
\begin{equation}
\delta_i  N_{j+r,k+s} \cap N_{j,k}= (\delta_i N_{j,k+s} \cap N_{j,k}) \cap (\delta_i N_{j+r,k} \cap  N_{j,k}).\n
\end{equation}
And because $s < 0$ and $r > 0$, an argument fashioned after that in \cite{baum1} (p. $175$, Lemma $4.3$) yields, that after conjugations by suitable powers 
  \begin{equation}
\delta_i N_{j,k+s} \cap N_{j,k}= \delta_i N_{j,k},~ \text{and} \n
\end{equation}
\begin{equation}
\delta_i N_{j+r,k} \cap N_{j,k}= \delta_i N_{j,k}.\n
\end{equation}
So each term in the union can be re-expressed as
\begin{equation}
\delta_i N_{j+r,k+s} \cap N_{j,k}= \delta_i N_{j,k}.\n
\end{equation}
Notice that this expression is independent of $r$ and $s$. Thus, we get
\begin{equation}
\delta_i N  \cap N_{j,k}= \delta_i N_{j,k}
\text{(\cite{baum1}[\text{pg}. 175, \text{line} 16])}.\n
\end{equation}
We claim that
\begin{equation}
\delta_i N  \cap N_{j,k}= \delta_i N_{j,k}.\n
\end{equation}
implies that $N$ is residually solvable.
To see this, let  $a$  be a non-trivial element of
\begin{equation}
N= \cup_{r<0; s>0}  N_{r,s}. \n
\end{equation}
Then, there is an integer  $j=j(a) \in \mathbb{N}$ such that $a \in N_{-j,j}$. By our (inductive) hypothesis at the outset 
$N_{-j,j}$  is residually solvable. Consequently,there exists an integer $i \in \mathbb{N}$  such that $a \notin \delta_i N_{-j,j}$.  Then, since  $\delta_i N \cup N_{-j,j} =\delta_i N_{-j,j}$, we see that $a \notin \delta_i N \cap N_{-j,j}$.  But $a \in N_{-j,j}$. So it must be the case that $a \notin \delta_i N$. Thus we have found a normal subgroup $\delta_i N \lhd N$  with the property that  $a \notin \delta_i N$ and   $N/ \delta_i N$ is solvable. Hence $N$ is residually solvable.
\end{proof}

\section{The relator is a basic commutator}
The tools of the Magnus theory were of good use for proving residual
solvability through  gaining  information about the structure of the
two-generator one-relator groups where the relator is a particular
type of basic commutator.

We begin with recalling P.Hall's \cite{hall} definition of the basic
commutators (in terms of the free group $F$ on $\{x_1,...,x_q \}$)
and their linear ordering (in terms of their \emph{weights}).

\begin{definition} Basic Commutators.
\begin{enumerate}
\item  The basic commutators of weight one with their linear order are $x_1 <x_2 < \cdots < x_q$; for their weights we write
$wt(x_i)=1$.
\item    Having defined the basic commutators of weight less than $n$, the basic commutators with weight $n$  are of the form  $c_n=[c_i,c_j]$
where $c_i$  and $c_j$  are all the basic commutators satisfying $wt(c_i)
+wt(c_j)=n$, $c_i>c_j$,  and such that if $c_i=[c_s,c_t]$, then $c_j
\geq c_t$.
\end{enumerate}
\end{definition}

In the following, for positive integers $k$ we will use the notation
$s_1$ = $x$, and $s_{k+1}=[s_k ,y]$.

\begin{theorem}
The group $G= \langle x,y; r=[s_k,y] \rangle$ is free-by-cyclic,
therefore residually solvable.
\end{theorem}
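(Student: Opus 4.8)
The plan is to realize $G$ as a cyclic extension of a free normal subgroup by means of the Magnus break-down of Lemma \ref{mag1}, exactly as in the worked examples preceding Theorem \ref{main1}. First I would record the two exponent-sum facts that make the machine run: since $s_{k+1}=[s_k,y]$ is an iterated commutator, both $x$ and $y$ occur in $r=[s_k,y]$ with exponent sum zero, and $x$ genuinely occurs in $r$. The decisive choice is to conjugate by $y$ rather than by $x$: setting $x_i=y^{i}xy^{-i}$ and $N=gp_G(x)$, I would rewrite $r$ in terms of the $x_i$ by the usual process of reading $s_{k+1}$ left to right and recording, for each occurrence of $x^{\pm1}$, the running $y$-exponent as its subscript. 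A direct computation gives $s_2=x_0^{-1}x_{-1}$, so the base case already exhibits subscript range $[-1,0]$ with each endpoint occurring exactly once.

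The heart of the argument is an induction driven by the recursion $s_{k+1}=s_k^{-1}\,y^{-1}\,s_k\,y$. Reading this word from $y$-level $0$, the block $s_k^{-1}$ is read at level $0$ and the block $s_k$ is read at level $-1$ (the intervening $y^{-1}$ lowering the level and the final $y$ restoring it). Hence, if by induction the rewrite of $s_k$ occupies subscripts $[-(k-1),0]$ with the top subscript $0$ and the bottom subscript $-(k-1)$ each attained exactly once, then the first block contributes subscripts $[-(k-1),0]$ and the second contributes $[-k,-1]$. Their union is $[-k,0]$; moreover the maximal subscript $0$ can only come from the first block and the minimal subscript $-k$ only from the second, so each extreme is attained exactly once in $r=s_{k+1}$, since inverting a block permutes the signs of the exponents but preserves which subscripts appear and with what multiplicity. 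This is precisely the hypothesis $\mu=-k<\nu=0$ with $x_{\mu}$ and $x_{\nu}$ each occurring once, so Lemma \ref{mag1} applies and $N=gp_G(x)$ is free.

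To finish, I would observe that setting $x=1$ sends each $s_j$ with $j\ge 2$ to $1$, so $r$ dies in $G/N$ and $G/N=\langle y\rangle\cong\mathbb{Z}$; thus the exact sequence $1\to N\to G\to\mathbb{Z}\to 1$ with $N$ free exhibits $G$ as free-by-cyclic. Since a free group is residually solvable and $\mathbb{Z}$ is solvable, Gruenberg's Lemma \ref{P} yields the residual solvability of $G$. The step I expect to be the main obstacle is the book-keeping in the induction, and in particular the observation that one \emph{must} conjugate by $y$: conjugating by $x$ instead already fails at $s_3$, where the extreme $y$-subscripts are each attained more than once and Lemma \ref{mag1} does not apply. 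The only remaining routine checks are that $r$ is cyclically reduced and presented in the correct Magnus form, which follow from the explicit shape of $s_{k+1}$.
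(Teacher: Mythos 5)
Your proposal is correct and follows essentially the same route as the paper: rewrite $r$ via the Magnus break-down in the conjugates of $x$ by powers of $y$, induct on $k$ using the recursion $[s_k,y]=s_k^{-1}(s_k)^y$ to show the extreme subscripts each occur exactly once, and apply Lemma \ref{mag1} together with Lemma \ref{P}. The only difference is cosmetic --- your convention $x_i=y^i x y^{-i}$ places the subscripts in $[-k,0]$ where the paper's $x_i=y^{-i}xy^{i}$ gives $[0,k]$ --- and your bookkeeping is in fact more detailed than the paper's one-line induction.
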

\begin{proof}
Following the Magnus theory, we put $x_i=y^{-i} x y^i$. Using induction on the weight of the commutator and the relationship
\begin{equation}
[s_k,y]=s_k^{-1} (s_k)^y, (k > 0) \n
\end{equation}
we see that the minimum index and maximum index in $r_0$ are $0$ and
$k$, respectively, and both of $x_0$  and $x_k$ only occur once in
$r_0$. By Lemma \ref{mag1} it follows, similarly to previous cases,
that $G$ is free-by-cyclic.
\end{proof}

\section{Open Problems}
\begin{enumerate}
\item Is it algorithmically decidable whether a one-relator group is residually solvable?
\item Are one-relator groups generically residually solvable? In other word, are they in most cases residually solvable?
In \cite{bog}, a conjecture of I.Kapovich is quoted and it states
that many one-relator groups are finitely generated free-by-cyclic.
\item Do there exist residually finite one-relator groups that are not residually
solvable? (This is recasting a question in \cite{ADK} in this
context.)
\item Certain basic commutators make were shown in this paper to produce residually solvable one-relator
groups. Would all basic commutators have that property? If not can
the techniques described here be extended to more one relator groups
where the relator is a basic commutator?
\item Find further examples of non-positive one relator groups that fail to be residually
solvable.
\item Find examples of residually solvable one-relator groups that are not free-by-cyclic.
\end{enumerate}

\end{document}